\newtheorem{theorem}{Theorem}[section]
\newtheorem{lemma}[theorem]{Lemma}
\newtheorem{proposition}[theorem]{Proposition}
\theoremstyle{definition}
\newtheorem{remark}[theorem]{Remark}
\numberwithin{equation}{section}
\newcommand{\diver}{\mathrm{div}\,}
\newcommand{\dx}{\,\mathrm{d}x}
\newcommand{\dl}{\displaystyle}
\newcommand{\na}{\nabla}
\newcommand{\dnu}{\,\mathrm{d}\nu}
\newcommand{\ra}{\,\rightarrow}
\newcommand{\loc}{{\mathrm{loc}}}
\newcommand{\N}{\mathbb{N}}
\newcommand{\R}{\mathbb{R}}
\def\ga{\alpha}     \def\gb{\beta}       \def\gg{\gamma}
       \def\gd{\delta}      
                         \def\vge{\varepsilon}
       \def\vgf{\varphi}    
\def\gm{\mu}        \def\gn{\nu}
     \def\Gd{\Delta}
\def\Gw{\Omega}              
\title[Weighted $L^p$-Hardy inequality]{On weighted $L^p$-Hardy inequality\\ on domains in $\R^n$}
\author{Divya Goel}
\address{Department of Mathematics\\
	Technion - Israel Institute of Technology\\
	Haifa 32000\\
	Israel }
\email{divyagoel2511@gmail.com; divya.goel@campus.technion.ac.il}
\author{Yehuda Pinchover}
\address{Department of Mathematics\\
	Technion - Israel Institute of Technology\\
	Haifa 32000\\
	Israel }
\email{pincho@technion.ac.il}
\author{Georgios  Psaradakis}
\address{University of Mannheim, Math IV\\
	Mannheim 68131, Germany}
\email{psaradakis@mail.uni-mannheim.de; psaradakis@outlook.com}
\dedicatory{Dedicated to Professor Shmuel Agmon}
\keywords{Hardy inequality, positive  solution, Agmon-Allegretto-Piepenbrink theorem.}
\subjclass[2010]{49R05, 35B09, 35J92.}
\begin{document}
\begin{abstract}
We consider weighted $L^p$-Hardy inequalities involving the distance  to the boundary of a domain in the $n$-dimensional Euclidean space with nonempty boundary.
Using criticality theory, we give an alternative proof of the following result of F.~G.~Avkhadiev (2006)

\medskip

\noindent{\bf Theorem.} {\em
	Let $\Omega \subsetneqq \mathbb{R}^n$, $n\geq 2$, be an arbitrary domain, $1<p<\infty$ and  $\alpha + p>n$. Let $\mathrm{d}_\Omega(x) =\mathrm{dist}(x,\partial \Omega )$ denote the distance
	of a point $x\in \Omega$ to  $\partial \Omega$. Then the following Hardy-type inequality holds
	$$
	\int_{\Omega }\frac{|\nabla \varphi |^p}{\mathrm{d}_\Omega^{\alpha}}\,\mathrm{d}x \geq \left( \frac{\alpha +p-n}{p}\right)^p \int_{\Omega }\frac{|\varphi|^p}{\mathrm{d}_\Omega^{p+\alpha}}\,\mathrm{d}x  \qquad \forall \varphi\in C^{\infty }_c(\Omega),$$
	and the lower bound constant $\left( \frac{\alpha +p-n}{p}\right)^p$ is sharp.
}
\end{abstract}
\maketitle
%%%%%%%%%%%%%%%%%%%%%%%%%%%%%%%%%%%%%%%%%%%%%%%%%%%%%%%%%%%%%%%%%%%%%%%%%%%%%%%%
\section{Introduction}\label{sec0}
Let $\Omega$ be a domain in ${\mathbb{R}}^n$, $n\geq 2$ with nonempty boundary, and let $\mathrm{d}_\Omega (x)=\mathrm{dist}(x,\partial \Omega )$ denote the distance
of a point $x\in \Gw$ to the boundary of $\Omega$. Fix $p\in (1,\infty )$. We say that the {\em $L^p$-Hardy inequality} is satisfied in $\Omega $
if there exists $c>0$ such that
\begin{equation}
	\label{mainhardy}
	\int_{\Omega }|\nabla \vgf|^p\dx \geq c \int_{\Omega }\frac{|\vgf|^p}{\mathrm{d}_\Omega^p}\dx  \qquad \mbox{ for all $\vgf\in C^{\infty }_c(\Omega)$}.
\end{equation}
The {\em $L^p$-Hardy constant} of $\Omega$ is
the best constant $c$ for inequality (\ref{mainhardy}) which is denoted  here by  $H_p(\Omega )$. It is a classical result that goes back to Hardy himself (see for example \cite{BEL, permalkuf}) that if
$n=1$ and $\Omega \subsetneqq\R$ is a bounded or unbounded interval, then the $L^p$-Hardy inequality holds and $H_{p}(\Omega)$ coincides with the widely known constant
$$
c_p=\biggl(\frac{p-1}{p}\biggr)^p.
$$
Recall that   if $\Omega $ is bounded and has a sufficiently regular boundary in ${\mathbb{R}}^n$, then the $L^p$-Hardy inequality holds and $0< H_p(\Omega )\le c_p$  (for instance, see \cite{anc,mamipi}).  Moreover, if $\Omega$ is convex, or more generally, if it is weakly mean convex, i.e., if $\Delta \mathrm{d}_\Gw\leq 0$ in the distributional sense in $\Omega$ (see \cite{gromov,giga and giovanni, LLL, psaradakis}), then $H_p(\Omega )=c_p $ \cite{barfilter, dam, mamipi}. On the other hand, it is also well-known (see for example \cite{BEL,permalkuf}) that if $\Omega ={\mathbb{R}}^n\setminus\{0\}$ and $p\ne n$, then the $L^p$-Hardy inequality holds and  $H_{p}(\Omega)$  coincides with the other widely  known constant
$$
c_{p,n}=\biggl|\frac{p-n}{p}\biggr|^p,
$$
which indicates  that  the $L^p$-Hardy inequality does not hold for ${\mathbb{R}}^n\setminus\{0\}$ if $p=n$.

In the present paper we study  a {\em weighted} $L^p$-Hardy inequality involving the distance function to the boundary. We give a new proof for the following result.
\begin{theorem}\label{main_thm}
	Let $\Gw \subsetneqq \R^n$ be an arbitrary domain, where $n\geq 2$. Fix $1<p<\infty$ and  $\ga + p>n$.  Then
	\begin{equation}\label{dp15}
		\int_{\Omega }\frac{|\nabla \vgf|^p}{\mathrm{d}_\Gw^{\ga}}\dx \geq \left( \frac{\ga +p-n}{p}\right)^p \int_{\Omega }\frac{|\vgf|^p}{\mathrm{d}_\Gw^{p+\ga}}\dx  \qquad \forall \vgf\in C^{\infty }_c(\Omega),
	\end{equation}
	and the lower bound constant $$c_{\ga,p,n}:= \left( \frac{\ga +p-n}{p}\right)^p$$ is sharp.
	In particular, for $p>n$ we have
	$$H_p(\Omega )\geq    c_{p,n}=\left( \frac{p-n}{p}\right)^p  \quad \mbox{for any domain } \Gw \subsetneqq \R^n.$$
\end{theorem}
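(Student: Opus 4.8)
\noindent\emph{Proof proposal.}
The plan is to deduce \eqref{dp15} from the existence of an explicit positive supersolution of the weighted quasilinear equation
\[
-\Div\!\left(\frac{|\na u|^{p-2}\na u}{\mathrm{d}_\Gw^{\ga}}\right) = c_{\ga,p,n}\,\frac{u^{p-1}}{\mathrm{d}_\Gw^{p+\ga}}\qquad\text{in }\Gw,
\]
combined with the Agmon--Allegretto--Piepenbrink (AAP) principle, and then to read off sharpness from the model domain $\R^n\setminus\{0\}$. The geometric input that drives the whole argument is the distributional inequality
\[
\Delta \mathrm{d}_\Gw \le \frac{n-1}{\mathrm{d}_\Gw}\qquad\text{in }\mathcal{D}'(\Gw),
\]
valid for an \emph{arbitrary} domain: since $|\na \mathrm{d}_\Gw|=1$ a.e.\ and $\mathrm{d}_\Gw$ is semiconcave in the interior, the absolutely continuous part of $\Delta \mathrm{d}_\Gw$ is controlled by the mean-curvature term $(n-1)/\mathrm{d}_\Gw$, while the singular part supported on the cut locus is nonpositive. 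Crucially, equality holds for $\Gw=\R^n\setminus\{0\}$, where $\mathrm{d}_\Gw(x)=|x|$; this is the extremal configuration.

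Next I would test the power function $u:=\mathrm{d}_\Gw^{\,\beta}$ with $\beta>0$. Using $|\na \mathrm{d}_\Gw|=1$ one computes
\[
\frac{|\na u|^{p-2}\na u}{\mathrm{d}_\Gw^{\ga}}
=\beta^{p-1}\,\mathrm{d}_\Gw^{(\beta-1)(p-1)-\ga}\,\na \mathrm{d}_\Gw,
\]
and taking the distributional divergence, and inserting the bound on $\Delta \mathrm{d}_\Gw$ (since $\beta^{p-1}>0$, the bound is used with the correct sign), gives
\[
-\Div\!\left(\frac{|\na u|^{p-2}\na u}{\mathrm{d}_\Gw^{\ga}}\right)
\ge \lambda(\beta)\,\frac{u^{p-1}}{\mathrm{d}_\Gw^{p+\ga}},
\qquad
\lambda(\beta)=\beta^{p-1}\bigl[(\ga+p-n)-(p-1)\beta\bigr].
\]
A one-variable maximization shows that $\lambda$ is maximal at $\beta=(\ga+p-n)/p>0$, where $\lambda(\beta)=c_{\ga,p,n}$. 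Hence $u=\mathrm{d}_\Gw^{(\ga+p-n)/p}$ is a positive supersolution of the displayed equation. By the easy direction of the AAP principle, realized through the generalized Picone inequality (ground-state transform), the existence of such a supersolution yields the nonnegativity of the functional $\int_\Gw \mathrm{d}_\Gw^{-\ga}|\na\vgf|^p\dx - c_{\ga,p,n}\int_\Gw \mathrm{d}_\Gw^{-p-\ga}|\vgf|^p\dx$ for all $\vgf\in C^\infty_c(\Gw)$, which is precisely \eqref{dp15}.

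For sharpness I would specialize to $\Gw=\R^n\setminus\{0\}$. There $\mathrm{d}_\Gw=|x|$ and the geometric inequality holds with equality, so the supersolution above becomes a \emph{genuine} positive solution and the operator is critical; consequently \eqref{dp15} reduces to the classical weighted Hardy inequality, whose constant $c_{\ga,p,n}$ is known to be optimal (a null-sequence built from $|x|^{(\ga+p-n)/p}$ drives the quotient down to $c_{\ga,p,n}$). Since the same constant is asserted for all domains, it cannot be enlarged, which proves sharpness. The main obstacle is the rigorous distributional justification of the supersolution computation in view of the low regularity of $\mathrm{d}_\Gw$ on its cut locus: everything hinges on the correct (nonpositive) sign of the singular part of $\Delta \mathrm{d}_\Gw$, and on verifying that the weighted $p$-Laplacian, with the locally integrable weight $\mathrm{d}_\Gw^{-\ga}$, fits the hypotheses of the AAP/Picone machinery.
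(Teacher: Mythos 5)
Your proposal is correct, but it is not the route taken in the paper's main proof (Section~\ref{sec2}); it coincides almost exactly with the paper's own ``Alternative proof~I'' in the Appendix. There, Lemma~\ref{semiconcave} establishes the distributional bound $-\Delta \mathrm{d}_\Gw \geq -(n-1)/\mathrm{d}_\Gw$ (via convexity of $|x|^2-\mathrm{d}_\Gw^2$, i.e.\ exactly the semiconcavity/cut-locus sign argument you invoke), and Lemma~\ref{a3} carries out your power-function computation in properly weak form, yielding the supersolution constant $C_{\ga,p,n,\gg}=|\gg|^{p-1}\bigl((\ga+p-n)-(p-1)\gg\bigr)$, maximized at $\gg=(\ga+p-n)/p$ with value $c_{\ga,p,n}$ --- identical to your $\lambda(\beta)$. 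The conclusion via the AAP theorem and the sharpness argument on $\R^n\setminus\{0\}$ are also as in the paper. By contrast, the main proof avoids the semiconcavity lemma entirely: it builds local supersolutions $|x-y|^{(\ga+p-n)/(p-1)}$ centered at boundary points $y$, glues them by taking a minimum over a finite cover of $\Gw_\vge$ (weak comparison principle), applies the $u\mapsto u^{(p-1)/p}$ supersolution construction, and then passes to the full domain by a double exhaustion with the Harnack convergence principle. Your route is shorter and more self-contained, at the price of having to justify rigorously the distributional computation against the measure-valued $\Delta\mathrm{d}_\Gw$ --- which you correctly flag as the main obstacle, and which the paper resolves by the integration-by-parts identity \eqref{lapdq}; the covering/exhaustion route is longer but illustrates a supersolution-gluing technique that the authors suggest is adaptable to other lower-bound problems for Hardy constants.
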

\begin{remark}
		Theorem~\ref{main_thm} was proved by F.~G.~Avkhadiev in \cite{avk} using a cubic approximation of $\Gw$.
		%Recently Bo-Yong Chen \cite{chen} reproved the theorem using a simple integration by parts method.
		One should note that J.~L.~Lewis \cite{Lewis} proved that \eqref{mainhardy} holds true (for $\ga=0$) with
		{\em a fixed positive  constant independent on} $\Gw$, and in \cite{Wannebo}, A.~Wannebo generalized Lewis' result to the case  $\ga + p>n$.
\end{remark}
%\begin{remark}
%		See Lemma~\ref{a3} for a more general result.
%\end{remark}

We need the following version of the Harnack convergence principle which  will be used several times throughout the paper.
\begin{proposition}[Harnack convergence principle]\label{propdp1}
	Consider an exhaustion  $\{\Gw_i\}_{i\!=\!1}^\infty$ of smooth bounded domains such that
	$$  \left\{ x \in \overline{\Gw}~:~ \mathrm{d}_\Gw(x)>\frac{1}{i}\right\}  \subseteq   \Gw_i \Subset \Gw_{i+1}, \mbox{ and } \cup_{i \in \mathbb{N}}\Gw_i = \Gw.$$
	For each $i\in \N$, let $u_i$ be a positive (weak) solutions of the equation
	\begin{equation*}\label{dp2}
		-\diver(\mathrm{d}_{\Gw_i}^{-\ga} |\na u_i|^{p-2}\na u_i ) -\mu_i\frac{|u_i|^{p-2}u_i}{\mathrm{d}_{\Gw_i}^{\ga+p}}=0 \qquad \text{ in } \Gw_i
	\end{equation*}
	such that $u_i(x_0)=1$, where $x_0 \in \Gw_1$, and $\gm_i \in \R$.
	
	If  $\gm_i\to\gm$, then there exists $0<\gb<1$ such that, up to a subsequence, $\{u_i\}$ converges in $C^{0,\gb}_\loc(\Gw)$ to a positive (weak) solution $u\in W^{1,p}_\loc(\Gw)$  of the equation
	\begin{equation*}
		-\diver(\mathrm{d}_{\Gw}^{-\ga} |\na u|^{p-2}\na u ) -\mu\frac{|u|^{p-2}u}{\mathrm{d}_\Gw^{\ga+p}}=0 \qquad \text{ in } \Gw .
	\end{equation*}
\end{proposition}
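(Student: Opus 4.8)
The plan is to establish the Harnack convergence principle by combining local regularity theory for quasilinear $p$-Laplacian-type equations with a compactness argument, following the classical scheme for such convergence results. The key observation is that on any fixed compact subset $K\Subset\Gw$, for all sufficiently large $i$ the weights $\mathrm{d}_{\Gw_i}^{-\ga}$ and $\mathrm{d}_{\Gw_i}^{-\ga-p}$ are uniformly bounded above and below by positive constants (depending only on $K$), since $\mathrm{d}_{\Gw_i}\to\mathrm{d}_\Gw$ uniformly on $K$ and $\mathrm{d}_\Gw$ is bounded away from zero on $K$. Thus, restricted to $K$, each $u_i$ solves a quasilinear equation of the form $-\diver(A_i(x)|\na u_i|^{p-2}\na u_i)=V_i(x)|u_i|^{p-2}u_i$ with uniformly elliptic, bounded coefficients and uniformly bounded potential $V_i=\gm_i\,\mathrm{d}_{\Gw_i}^{-p}$, where the bound on $\gm_i$ follows from the convergence $\gm_i\to\gm$.

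First I would invoke the local Harnack inequality for positive solutions of such equations (in the spirit of Serrin and Trudinger): since $u_i(x_0)=1$ and $x_0\in\Gw_1\subseteq K$ for a connected $K$, the Harnack inequality gives a uniform upper and lower bound $0<c_K\le u_i\le C_K$ on $K$, with constants independent of $i$ once $i$ is large. Next, the local $C^{1,\gb}$ (or at least $C^{0,\gb}$) regularity estimates of DiBenedetto and Tolksdorf yield a uniform Hölder bound for $\{u_i\}$ on slightly smaller compact sets, with some exponent $0<\gb<1$ depending only on $n$, $p$, and the ellipticity constants. By the Arzelà–Ascoli theorem and a diagonal argument over an exhausting sequence of compact sets, I would extract a subsequence converging in $C^{0,\gb'}_\loc(\Gw)$, for $\gb'<\gb$, to a limit $u$ that is positive (by the uniform lower bound) and satisfies $u(x_0)=1$.

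The remaining task is to verify that the limit $u$ is a weak solution of the limiting equation in $\Gw$. For this I would combine the uniform Hölder bounds with the uniform $W^{1,p}_\loc$ bounds — the latter obtained via a Caccioppoli-type energy estimate using the test function $u_i$ itself (or a cutoff times $u_i$), exploiting the uniform $L^\infty_\loc$ bound already established. This gives weak convergence of $\na u_i$ to $\na u$ in $L^p_\loc$; upgrading to the a.e. (or strong $L^p_\loc$) convergence of gradients needed to pass to the limit in the nonlinear principal term is the standard but essential step, achievable through the monotonicity of the $p$-Laplacian operator together with the equation itself. Once $\na u_i\to\na u$ strongly in $L^p_\loc(\Gw)$, the uniform convergence of the coefficients $\mathrm{d}_{\Gw_i}^{-\ga}\to\mathrm{d}_\Gw^{-\ga}$ and $\gm_i\,\mathrm{d}_{\Gw_i}^{-\ga-p}\to\gm\,\mathrm{d}_\Gw^{-\ga-p}$ on compacta allows us to pass to the limit in the weak formulation against any fixed $\psi\in C^\infty_c(\Gw)$.

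The main obstacle I expect is precisely the strong convergence of the gradients: the nonlinearity of the principal part means weak $W^{1,p}_\loc$ convergence alone is insufficient to pass to the limit, and one must carefully employ the monotonicity argument (testing the difference of the equations for $u_i$ and using near-uniform convergence of the weights) to deduce $\na u_i\to\na u$ a.e. The boundary behavior of the weights near $\partial\Gw$ does not enter, since all estimates are purely interior and the weights are harmless on compact subsets of $\Gw$; the delicacy lies entirely in the uniform interior regularity and the nonlinear limit passage.
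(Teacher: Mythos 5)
Your proposal is correct in outline and is, in substance, a self-contained reconstruction of the proof that the paper outsources: the paper's entire proof of Proposition~\ref{propdp1} is the observation that $\mathrm{d}_{\Gw_i}\to\mathrm{d}_{\Gw}$ (locally uniformly, since $\{\mathrm{d}_\Gw>1/i\}\subseteq\Gw_i\subseteq\Gw$ forces $0\le \mathrm{d}_\Gw-\mathrm{d}_{\Gw_i}\le 1/i$ on compacta for large $i$), followed by an appeal to the general Harnack convergence principle of Giri--Pinchover for $(p,A)$-Laplacian equations with locally Morrey potentials. Your chain of steps --- uniform two-sided bounds on the weights $\mathrm{d}_{\Gw_i}^{-\ga}$ and on the potentials $\gm_i\mathrm{d}_{\Gw_i}^{-\ga-p}$ on each $K\Subset\Gw$, Serrin's Harnack inequality chained from the normalization point $x_0$, the DiBenedetto--Tolksdorf local H\"older estimates, Arzel\`a--Ascoli with a diagonal extraction, Caccioppoli bounds in $W^{1,p}_\loc$, and the monotonicity argument upgrading weak to a.e.\ convergence of gradients so that the nonlinear principal part passes to the limit --- is exactly the standard machinery behind the cited result, and you correctly identify the strong gradient convergence as the only genuinely delicate point. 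What the paper's route buys is brevity and the assurance that this delicate step has been carried out once in full generality in the reference; what your route buys is self-containedness at the cost of length. The one item you should keep explicit (it is the sole content of the paper's one-line proof) is the uniformity in $i$ of the structural hypotheses, i.e.\ the locally uniform convergence $\mathrm{d}_{\Gw_i}\to\mathrm{d}_\Gw$ together with the resulting uniform ellipticity on compacta for large $i$ --- which you do state at the outset, so I see no gap.
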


\begin{proof}
	Since $\mathrm{d}_{\Gw_i}\to \mathrm{d}_{\Gw}$, the theorem follows directly from \cite[Proposition 2.7]{GP}.
\end{proof}
The  paper is organized as follows: In Section~\ref{sec2} we give our proof of Theorem~\ref{main_thm} while in Appendix we outline two alternative proofs.
% which were kindly provided to us by the anonymous referee

\section{Proof of Theorem \ref{main_thm}}\label{sec2}
Our proof of Theorem~\ref{main_thm} is based on a simple  construction of a (weak) positive supersolutions to the associated Euler-Lagrange Lagrange equations
\begin{equation}\label{EL_eq}
	-\diver(\mathrm{d}_{\Gw}^{-\ga}|\nabla u|^{p-2}\nabla u) - \gm \frac{|u|^{p-2}u}{\mathrm{d}_{\Gw}^{\ga+p}} =0 \qquad \mbox{in } \Gw,
\end{equation}
for any $\gm<c_{\ga,p,n}$. Theorem~\ref{main_thm} then follows from the Harnack convergence principle (Proposition~\ref{propdp1}) together with the Agmon-Allegretto-Piepenbrink-type (AAP) theorem~\cite[Theorem~4.3]{pinpsa} which asserts that the Hardy inequality \eqref{dp15} holds true if and only if \eqref{EL_eq} admits a positive (super)solution for  $\gm=c_{\ga,p,n}$.
It seems that the method of the proof can be used to prove lower bounds for the best Hardy constant in different situations. 

%\section{Proof of Theorem \ref{main_thm}}
\begin{proof}[Proof of Theorem \ref{main_thm}]	
	A direct computation shows that for any $y \in \R^n$,the function
	$$u_{y}(x):=|x-y|^{(\ga+p-n)/(p-1)}$$
	is a positive solution of the equation	$$-\diver(\mathrm{d}_{\Gw_y}^{-\ga}(x)|\nabla u|^{p-2}\nabla u)=0 \qquad \mbox{in } \Gw_y :=  \R^n\setminus \{ y \},$$
	where $\mathrm{d}_{\Gw_y}(x)=|x-y|$.
	
	Hence, using the supersolution construction \cite{depi}, it follows that
	$$v_y(x):= u_{y}^{(p-1)/p}(x) =|x-y|^{(\ga+p-n)/p}$$ is a positive solution of the equation
	$$-\diver(\mathrm{d}_{\Gw_y}^{-\ga}|\nabla u|^{p-2}\nabla u) - c_{\ga,p,n}\frac{|u|^{p-2}u}{\mathrm{d}_{\Gw_y}^{\ga+p}} =0 \qquad \mbox{in } \Gw_y.$$
	Moreover, it is known \cite{BEL} (see also \cite{avk06}) that  $c_{\ga,p,n}$ is the best constant for the inequality
	\begin{equation*}
		\int_{\Gw_y} \frac{|\nabla \vgf|^p}{|x-y|^{\ga}} \dx \geq \gm \int_{\Gw_y}\frac{|\vgf|^p}{|x-y|^{p+\ga}}\dx  \qquad \forall \vgf\in C^{\infty}_c(\Gw_y).
	\end{equation*}
	Hence, the lower bound for the Hardy constant for the functional inequality
	\begin{equation*}
		\int_{\Omega }\frac{|\nabla \vgf|^p}{\mathrm{d}_\Gw^{\ga}}\dx \geq \gm  \int_{\Omega }\frac{|\vgf|^p}{\mathrm{d}_\Gw^{p+\ga}}\dx  \qquad \forall \vgf\in C^{\infty }_c(\Omega),
	\end{equation*} in a domain $\Gw\subsetneqq \R^n$ is less or equal to $c_{\ga,p,n}$.

	It remains to prove that \eqref{EL_eq} admits positive supersolutions in $\Gw$ for $$\gm= \gm_\gd:= c_{\ga,p,n}-\gd>0, \qquad \forall\, 0<\gd<c_{\ga,p,n},$$ where $\Gw \subsetneqq \R^n$ is an arbitrary domain. We divide the proof into two steps.
	
	\textbf{Step 1:} Assume first that $\Gw$ is a smooth  bounded domain.
	Fix $\gd$ as above, and  choose  $\vge>0$ small enough such that
	\begin{align}\label{dp12}
		\vge<  \min \left\{ \left(\frac{ c_{\ga,p,n}}{\mu_\gd}\right)^{1/p} \!-1,\;  \frac{\mu_\gd(\ga+p-n)}{p|\ga| c_{\ga,p,n} }\right\}.
	\end{align}
	For  $x \in \Gw$, let $  P(x)\in \partial \Gw$ be the projection $x$ of into $\partial \Gw$ which is well defind for  a.e. $x\in \Gw$, that is, $|x-P(x)|=\mathrm{d}_\Gw(x)$. For any $y\in \partial \Gw$, consider the set
\begin{align*}
	D_{y,\vge}\!: =\!\Big\{ x\in \Gw \mid & ~ |x-y|\!< \!(1+ \vge) \mathrm{d}_\Gw(x),~ \cos(x-y,x-P(x))\!>\!1-\vge, \\
	&  \quad \text{ and }\mathrm{d}_\Gw(x)\!>\!\vge/2  \big\}.
\end{align*}
	If \begin{align}\label{dp14}
		\Gw_\vge= \{ x \in \Gw \mid \mathrm{d}_\Gw(x)>\vge\},
	\end{align}
	then $\dl\cup_{y \in \partial \Gw} D_{y,\vge}$ is an open covering of the compact set  $\overline{\Gw}_\vge$. Therefore, there exist $y_i,~i=1,2,\cdots,m $ such that $\Gw_\vge \subseteq \dl\cup_{i=1}^{m} D_{y_i,\vge}$
	We note that  $u_y$ is a positive supersolution to the equation
	\begin{equation*}
		-\diver(\mathrm{d}_{\Gw}^{-\ga} |\na u|^{p-2}\na u ) + \vge |\ga| k_{\ga,p,n}\frac{|u|^{p-2}u }{\mathrm{d}_{\Gw}^{\ga+p}}
		=0 \qquad \text{ in } D_{y,\vge}.
	\end{equation*}
	where $k_{\ga,p,n}:= \left( \frac{\ga+p-n}{p-1}\right)^{p-1}$. Indeed,  for $\ga\geq0$, 
	\begin{align*}
		-\diver(&\mathrm{d}_{\Gw}^{-\ga} |\na u_y|^{p-2}\na u_y ) 	 \\& = \ga \mathrm{d}_\Gw^{-\ga} k_{\ga,p,n}\left(\frac{ \na \mathrm{d}_\Gw\cdot (x-y)|x-y|^{\ga-n}}{\mathrm{d}_\Gw} -|x-y|^{\ga-n} \right)\\
		& \geq \ga \mathrm{d}_\Gw^{-\ga} k_{\ga,p,n}|x-y|^{\ga-n} \left(\frac{ |\na \mathrm{d}_\Gw||x-y|(1-\vge)}{\mathrm{d}_\Gw} -1\right)\\
		&\geq   \ga \mathrm{d}_\Gw^{-\ga} k_{\ga,p,n}|x-y|^{\ga-n}  \left( 1-\vge-1  \right)\\
		& = -\vge \ga \mathrm{d}_\Gw^{-\ga} k_{\ga,p,n}|x-y|^{\ga-n} \qquad  \mbox{in } D_{y,\vge}.
	\end{align*}
	Hence,
	\begin{align*}
		-\diver(\mathrm{d}_{\Gw}^{-\ga} |\na u_y|^{p-2}\na u_y )&  + \vge |\ga| k_{\ga,p,n}\frac{|u_y|^{p-2}u_y }{\mathrm{d}_{\Gw}^{\ga+p}}\\
		& \geq  \mathrm{d}_\Gw^{-\ga} k_{\ga,p,n}|x-y|^{\ga-n}  \left(-\vge\ga + \vge |\ga| \frac{|x-y|^p}{\mathrm{d}_{\Gw}^{p}}	
		\right)\\
		& \geq  \mathrm{d}_\Gw^{-\ga} k_{\ga,p,n}|x-y|^{\ga-n}  \left(-\vge\ga + \vge |\ga| 	
		\right)=0 \quad  \mbox{in } D_{y,\vge}.
	\end{align*}
	Similarly, for $\ga<0$
	\begin{align*}
		-\diver(&\mathrm{d}_{\Gw}^{-\ga} |\na u_y|^{p-2}\na u_y ) + \vge |\ga| k_{\ga,p,n}\frac{|u_y|^{p-2}u_y }{\mathrm{d}_{\Gw}^{\ga+p}}\\
		& \geq  \mathrm{d}_\Gw^{-\ga} k_{\ga,p,n}|x-y|^{\ga-n}  \left(   \frac{ \ga \na \mathrm{d}_\Gw\cdot (x-y)}{\mathrm{d}_\Gw} -\ga + \vge |\ga| \frac{|x-y|^p}{\mathrm{d}_{\Gw}^{p}}	
		\right)\\
		& \geq  \mathrm{d}_\Gw^{-\ga} k_{\ga,p,n}|x-y|^{\ga-n}  \left(  \frac{\ga |\na \mathrm{d}_\Gw||x-y|}{\mathrm{d}_\Gw} -\ga  + \vge |\ga| 	
		\right)\\
		& \geq  \mathrm{d}_\Gw^{-\ga} k_{\ga,p,n}|x-y|^{\ga-n}  \left( \ga(1+\vge) -\ga  + \vge |\ga| 	
		\right)\\
		& \geq  \mathrm{d}_\Gw^{-\ga} k_{\ga,p,n}|x-y|^{\ga-n}  \left( \vge\ga  + \vge |\ga| 	
		\right)
		=0  \qquad  \mbox{in } D_{y,\vge}.
	\end{align*}
	Now, the weak comparison principle  \cite[Lemma~5.1]{pinpsa} implies that $$u_\gd:= \min \{u_{y_i}\mid 1\leq i\leq m\}$$ is a supersolution  to the  equation
	\begin{equation}\label{dp1}
		-\diver(\mathrm{d}_{\Gw}^{-\ga} |\na u|^{p-2}\na u ) + \vge |\ga| k_{\ga,p,n}\frac{|u|^{p-2}u }{\mathrm{d}_{\Gw}^{\ga+p}} =0\quad  \text{ in } \Gw_\vge.
	\end{equation}
	\textbf{Claim 1:}  There exists a positive solution to the following equation
	\begin{equation}\label{dp11}
		-\diver(\mathrm{d}_{\Gw}^{-\ga} |\na u|^{p-2}\na u ) -
		\left(  \mu_\gd- \frac{\vge p|\ga|  c_{\ga,p,n}}{\ga+p-n}\right) \frac{|u|^{p-2}u }{\mathrm{d}_{\Gw}^{\ga+p}} =0  \text{ in } \Gw_\vge.
	\end{equation}
	Employing  the AAP-type theorem \cite[Theorem~4.3]{pinpsa},  it is enough to prove that there exists a positive  supersolution  to \eqref{dp11} in $\Gw_\vge$.   We use the supersolution construction \cite{depi} and prove that $v_\gd:= u_\gd^{(p-1)/p}$ is a supersolution to \eqref{dp11}.
	Using the fact that $u_\gd$ is a supersolution to \eqref{dp1}, we deduce that
	\begin{align*}
		& -\diver(\mathrm{d}_{\Gw}^{-\ga} |\na v_\gd|^{p-2}\na v_\gd ) -
		\left(  \mu_\gd- \frac{\vge p|\ga|  c_{\ga,p,n}}{\ga+p-n}\right)\frac{|v_\gd|^{p-2}v_\gd}{\mathrm{d}_\Gw^{\ga+p}}\\
		& =\! - \!\left(\!\frac{p-1}{p}\!\right)^{p-1}\!\!\!\!\!\diver\!(\mathrm{d}_{\Gw}^{-\ga} |\na u_\gd|^{p-2}\na u_\gd  u_\gd^{-(p-1)/p}) \!-\! \!
		\left(\!  \mu_\gd \!-\!  \frac{\vge p|\ga|  c_{\ga,p,n}}{\ga+p-n}\!\right)\!\! \frac{|u_\gd|^{\! (p-1)^2/p}}{\mathrm{d}_\Gw^{\ga+p}}\\
		& \geq \left(\frac{p-1}{p}\right)^{p}\mathrm{d}_{\Gw}^{-\ga} |\na u_\gd|^{p}  u_\gd^{-(2p-1)/p} -\mu_\gd\frac{|u_\gd|^{(p-1)^2/p}}{\mathrm{d}_\Gw^{\ga+p}}\\
		& = \frac{|u_\gd|^{(p-1)^2/p}}{\mathrm{d}_\Gw^{\ga+p}}\left[\left(\frac{p-1}{p}\right)^p\frac{|\na u_\gd|^pd^p}{u_\gd^p}- \mu_\gd \right] \qquad  \mbox{in } \Gw_{\vge}.
	\end{align*}
	Therefore, we need to prove that $\left[\left(\frac{p-1}{p}\right)^p\frac{|\na u_\gd|^pd^p}{u_\gd^p}- \mu_\gd \right]\geq 0$. Indeed,
	for a.e. $x \in \Gw_\vge$,  $u_\gd= u_{y_{i_0}}$ for some $i_0$  in a neighborhood of $x$.  Using the definition of $\vge$ and $D_{y,\vge}$, we get
	\begin{align*}
		&	\left(\frac{p-1}{p}\right)^p\frac{|\na u_\gd|^pd^p}{u_\gd^p}- \mu_\gd\\
		&= \left(\frac{p-1}{p}\right)^p\left(\frac{\ga+p-n}{p-1}\right)^p \frac{d^p}{|x-y_{i_0}|^p}- \mu_\gd\geq \frac{c_{\ga,p,n}}{(1+\vge)^p} - \mu_\gd
		>0.
	\end{align*}
	Hence, Claim 1 is proved.
	
	\medskip
	
	\textbf{Claim 2:} There exists a positive solution to the following equation
	%  $v_\gd$ is a solution to the following
	%equation
	\begin{equation}\label{dp13}
		-\diver(\mathrm{d}_{\Gw}^{-\ga} |\na u|^{p-2}\na u ) -\mu_\gd\frac{|u|^{p-2}u}{\mathrm{d}_\Gw^{\ga+p}}=0 \quad \text{ in } \Gw.
	\end{equation}
	Let $\vge_0>0$ be small enough such that \eqref{dp12} holds, and set $\vge_i := \min\{ \vge_0,\frac{1}{i}\}$. Clearly,  $ \Gw_{\vge_i} \Subset \Gw_{\vge_{i+1}} $ for $i$ large enough, and $\Gw= \cup_{i=1}^{\infty} \Gw_{\vge_i}$, where $\Gw_{\vge_i}$ is defined in \eqref{dp14}. Employing Claim~1, it follows that  for $i\geq 1$ there exists a positive solution $u_i$ to \eqref{dp11} in $\Gw_{\vge_i}$ satisfying $u_i(x_0)=1$.  In light of the Harnack convergence principle (Proposition \ref{propdp1}), it follows that Claim~2 holds.
	
	\medskip
	
	\textbf{Step 2: } Assume now that $\Gw$ is an arbitrary domain.
	Choose a  smooth compact exhaustion $\{ \Gw_i\}$ of $\Gw$. That  is, $\{ \Gw_i\}$ is a sequence of smooth bounded domains  such that  $ \Gw_i \Subset \Gw_{i+1}\Subset \Gw$, $\Gw= \cup_{i=1}^{\infty} \Gw_i$, and
	$$\max_{\substack{x\in \partial\Gw_i\cap B_i\\ y\in \partial\Gw\cap B_i}}\{\text{dist}(x,\partial \Gw) , \text{dist}(y,\partial \Gw_i) \} < \frac{1}{i},$$
	where $B_i=\{|x|<i\}$. Observe that $\mathrm{d}_{\Gw_i} \ra \mathrm{d}_{\Gw}$ a.e. in $\Gw$. Indeed,
	for $x\in \overline{\Gw_i\cap B_i}$ one has
	$$|\mathrm{d}_{\Gw}(x) - \mathrm{d}_{\Gw_i}\!(x)|=|\text{dist}(x,\partial \Gw)-\text{dist}(x,\partial \Gw_i)|<\frac1i \,.$$
	Invoking Claim 2, it follows that  for each $i\geq 1$, there exists  $u_i>0$ satisfying $u_i(x_0)=1$ and the equation
	\begin{equation*}
		-\diver(\mathrm{d}_{\Gw_i}^{-\ga} |\na u_i|^{p-2}\na u_i ) -
		\mu_\gd \frac{|u_i|^{p-2}u_i }{\mathrm{d}_{\Gw_i}^{\ga+p}} =0 \quad  \text{ in } \Gw_i.
	\end{equation*}
	
	Using again the Harnack convergence principle (Proposition \ref{propdp1}), we obtain a positive solution $u_\gd$ to \eqref{dp13} satisfying $u_\gd(x_0)=1$. Letting $\gd\to 0$, we get by Harnack convergence principle a positive solution $u_0$ to the equation
	\begin{equation}\label{dp21}
		-\diver(\mathrm{d}_{\Gw}^{-\ga} |\na u|^{p-2}\na u ) -
		c_{\ga,p,n} \frac{|u|^{p-2}u }{\mathrm{d}_{\Gw}^{\ga+p}} =0 \quad  \text{ in } \Gw
	\end{equation}
	that satisfies $u_0(x_0)=1$.
	In light of the AAP-type theorem we obtain the Hardy inequality
	\begin{align*}
	\int_{\Omega }\frac{|\nabla \vgf|^p}{\mathrm{d}_\Omega^{\alpha}}\,\mathrm{d}x \geq \left( \frac{\alpha +p-n}{p}\right)^p \int_{\Omega }\frac{|\vgf|^p}{\mathrm{d}_\Omega^{p+\alpha}}\,\mathrm{d}x  \qquad \forall \vgf\in C^{\infty }_c(\Omega). \qquad  \qedhere
	\end{align*}
\end{proof}

\appendix
\section{Different Proofs}
Here we  give  two alternative proofs of Theorem~\ref{main_thm}, both of them  do not use an exhaustion argument.  On the other hand, both rely on the following folklore lemma which is of independent interest, see for example, propositions 1.1.3. and 2.2.2. in \cite{CS}  (cf. \cite[Theorem~1.6]{LLL}, where the case of $C^2$-domains is discussed). 
\begin{lemma}\label{semiconcave}
Let $\Gw\! \subsetneqq \!\R^n$ be a domain. 

(i) The inequality  \[-\Delta \mathrm{d}_{\Gw} \geq -\frac{n-1}{\mathrm{d}_{\Gw}} \, ,\] holds true in the sense of distributions in $\Omega$.

(ii) Moreover,
\begin{equation}\label{lapdq}
\int_\Gw \nabla\psi\cdot\nabla{\rm d}_\Gw {\rm d}x
\geq -(n-1)\int_\Gw\frac{\psi}{{\rm d}_\Gw}{\rm d}x\quad\forall~\psi\in C^\infty_c(\Gw), \psi\geq0.
\end{equation} 
\end{lemma}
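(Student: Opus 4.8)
The plan is to read Lemma~\ref{semiconcave} as a quantitative \emph{semiconcavity} statement for the distance function and to reduce part (ii) to part (i) by a single integration by parts. Once (i) is known as a distributional inequality $\Delta \mathrm{d}_\Gw \leq (n-1)/\mathrm{d}_\Gw$, testing it against a nonnegative $\psi\in C^\infty_c(\Gw)$ gives
\[
\int_\Gw \na\psi\cdot\na \mathrm{d}_\Gw\dx = -\langle \Delta \mathrm{d}_\Gw,\psi\rangle \geq -(n-1)\int_\Gw \frac{\psi}{\mathrm{d}_\Gw}\dx,
\]
which is exactly \eqref{lapdq}, so the whole weight of the argument falls on (i).

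For (i), I would first recall that $\mathrm{d}_\Gw$ is Lipschitz with $|\na \mathrm{d}_\Gw|=1$ a.e., and exploit the representation $\mathrm{d}_\Gw(x)=\inf_{y\in\partial\Gw}|x-y|$ as an infimum of the smooth cones $\phi_y(x):=|x-y|$. On any compact $K\Subset\Gw$ with $\mathrm{d}_\Gw\geq \gd>0$, only points $y$ with $|x-y|\geq\gd$ are relevant, and there $D^2\phi_y(x)=|x-y|^{-1}\bigl(I-\widehat{r}\otimes\widehat{r}\bigr)\leq \gd^{-1}I$ uniformly; hence $\mathrm{d}_\Gw$, being an infimum of functions with uniformly bounded Hessians, is semiconcave on $K$. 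The sharp pointwise bound then follows from the \emph{touching-from-below} property: for a.e. $x\in\Gw$ the nearest point $y=P(x)\in\partial\Gw$ is unique, and $\mathrm{d}_\Gw(z)\leq |z-y|=\phi_y(z)$ for all $z$ with equality at $z=x$, so at any point of twice differentiability $D^2\mathrm{d}_\Gw(x)\leq D^2\phi_y(x)$, and taking traces yields
\[
\Delta \mathrm{d}_\Gw(x)\leq \Delta\phi_y(x)=\frac{n-1}{|x-y|}=\frac{n-1}{\mathrm{d}_\Gw(x)}\qquad\text{a.e. in }\Gw.
\]

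The main obstacle is upgrading this a.e. pointwise inequality to a genuine distributional one, since $\mathrm{d}_\Gw$ fails to be $C^2$ precisely on the ridge (cut locus), which need not be controlled by naive second-derivative estimates. Semiconcavity resolves this. On $K\Subset\Gw$ the function $\mathrm{d}_\Gw-\tfrac{1}{2\gd}|x|^2$ is concave, so the distributional Hessian of $\mathrm{d}_\Gw$ is a matrix-valued Radon measure whose singular part is negative semidefinite; in particular the singular part of $\Delta \mathrm{d}_\Gw$ is nonpositive. By Alexandrov's theorem the absolutely continuous part of this Hessian agrees a.e. with the pointwise second differential, so the a.c.\ part of $\Delta \mathrm{d}_\Gw$ is bounded a.e.\ by $(n-1)/\mathrm{d}_\Gw$ from the previous step. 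Adding the two parts gives $\Delta \mathrm{d}_\Gw\leq (n-1)/\mathrm{d}_\Gw$ as measures on $K$, and since $K\Subset\Gw$ is arbitrary this is precisely (i). I expect the delicate point to be the clean separation into absolutely continuous and singular parts together with the sign of the singular part; the infimum-of-cones representation is what makes both the semiconcavity constant and the sharp trace bound transparent.
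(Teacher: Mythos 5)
Your proposal is correct, and your reduction of (ii) to (i) by a single integration by parts (using only that $\na \mathrm{d}_\Gw\in L^\infty$) is exactly what the paper does. For part (i), however, you take a genuinely different route. The paper observes that $|x|^2-\mathrm{d}_{\Gw}^2(x)=\sup_{y\in\partial\Gw}\left(2x\cdot y-|y|^2\right)$ is convex as a supremum of affine functions, so its distributional Laplacian is a nonnegative Radon measure; expanding $\Delta(\mathrm{d}_\Gw^2)=2|\na \mathrm{d}_\Gw|^2+2\mathrm{d}_\Gw\Delta \mathrm{d}_\Gw$ and using $|\na \mathrm{d}_\Gw|=1$ a.e.\ then gives $(n-1)-\mathrm{d}_\Gw\Delta \mathrm{d}_\Gw\geq 0$ as measures, which is (i). You instead work with $\mathrm{d}_\Gw$ itself as an infimum of the cones $|x-y|$, deduce local semiconcavity from the uniform Hessian bound $\gd^{-1}I$ away from $\partial\Gw$, obtain the sharp pointwise bound $\Delta \mathrm{d}_\Gw\leq (n-1)/\mathrm{d}_\Gw$ at Alexandrov points by touching from below with the optimal cone, and then upgrade to a distributional inequality via the Lebesgue decomposition of the Hessian measure of a semiconcave function (absolutely continuous part given by the Alexandrov Hessian, singular part negative semidefinite). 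Both arguments are sound. The paper's is shorter on the page but delegates the delicate step --- justifying the distributional identity for $\Delta(\mathrm{d}_\Gw^2)$ when $\mathrm{d}_\Gw$ is merely Lipschitz --- to the cited references, whereas yours is self-contained modulo Alexandrov's theorem and the structure theory of semiconcave functions; the latter is precisely the content of the Cannarsa--Sinestrari monograph that the paper names as the source of this ``folklore'' lemma, so your argument is in fact closer to the primary reference. One small point worth making explicit in your write-up: the uniqueness of the nearest point $P(x)$ at the points you use is automatic because Alexandrov points are in particular points of differentiability of $\mathrm{d}_\Gw$.
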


\begin{proof}\label{semiconcave remark}
(i) Since the function $|x|^2-\mathrm{d}_{\Gw}^2(x)$ is convex, it follows  that its distributional Laplacian is a nonnegative Radon measure (see \cite[Theorem 2-\S6.3]{evans and gariepy} and \cite[Lemma 2.1]{psaradakis} for the details). Hence,  
$$ \langle(n-1)-\mathrm{d}_{\Gw}\Delta \mathrm{d}_{\Gw}, \vgf \rangle  = \int_\Gw \vgf \dnu  \qquad \forall~\vgf \in C^\infty_c(\Gw),$$ 
where $\gn$ is a nonnegative Radon measure, and $\langle \cdot,  \cdot\rangle : \mathcal{D'}(\Gw)\times \mathcal{D}(\Gw)$ is the canonical duality pairing between distributions and test functions. 
Consequently, the distributional Laplacian of  $- \mathrm{d}_{\Gw}$ is itself a signed Radon measure $\mu$.  Thus,
\begin{equation*}\label{lapd}
 - \langle\Delta \mathrm{d}_{\Gw}, \psi \rangle\!=\!-\!\int_\Gw \!\!\Gd \psi {\rm d}_\Gw{\rm d}x
  \!=\! \int_\Gw\!\! \psi{\rm d}\mu
   \! \geq \! -(n-1)\!\!\int_\Gw \frac{\psi}{{\rm d}_\Gw}{\rm d}x \;\; \forall~\psi\in C^\infty_c(\Gw), \psi\geq0 .
\end{equation*}

(ii) Since $\nabla  {\rm d}_\Gw \in L^\infty(\Gw,\R^n)$, it follows that  
$$-\langle (\mathrm{d}_{\Gw})_{x_i, x_i}, \psi \rangle =  \int_\Gw  (\mathrm{d}_{\Gw})_{x_i} \psi_{x_i}\dx.$$  
Therefore,  $\Delta \mathrm{d}_{\Gw}$, the distributional divergence   of $\nabla  \mathrm{d}_{\Gw}$, satisfies
$$ - \langle\Delta \mathrm{d}_{\Gw}, \psi \rangle=  \int_\Gw  \nabla {\rm d}_\Gw\cdot \nabla \psi \dx \qquad\forall~\psi\in C^\infty_c(\Gw).$$
Hence,  
$$ \int_\Gw  \nabla {\rm d}_\Gw\cdot \nabla \psi \dx = - \langle\Delta \mathrm{d}_{\Gw}, \psi \rangle 
\geq  -(n-1)\int_\Gw\frac{\psi}{{\rm d}_\Gw}{\rm d}x   \;\; \forall~\psi\in C^\infty_c(\Gw), \psi\geq0 . \qedhere$$  
\end{proof}
%%%%%%%%
\begin{lemma}\label{a3}
	Let $\Gw\! \subsetneqq \!\R^n$ be a domain. Let
	$$1<p<\infty, \quad  \ga\in \R^n, \quad \mbox{and }\;\;    0<\gg < \frac{\ga+p-n}{p-1}\,.$$
	 Then
	$\mathrm{d}_{\Gw}^\gg$ is a (weak) positive supersolution of the equation
	\begin{equation*}
		-\diver(\mathrm{d}_{\Gw}^{-\ga} |\na u|^{p-2}\na u)- C_{\ga,p,n,\gg} \frac{|u|^{p-2}u}{\mathrm{d}_{\Gw}^{p+\ga}}=0   \quad \text{ in } \Gw,
	\end{equation*}
where $C_{\ga,p,n,\gg} := |\gg|^{p-1}(\ga-n+1-(\gg-1)(p-1))>0$.

%	
%	  and $\ga\in \R$ be such that
%	$$C_{\ga,p,n,\gg} := |\gg|^{p-1}(\ga-n+1-(\gg-1)(p-1))>0.$$
%	Then  the following weighted Hardy inequality holds true
%	\begin{equation}\label{dp19}
%\int_{\Omega }\frac{|\nabla \vgf|^p}{\mathrm{d}_\Gw^{\ga}}\dx \geq C_{\ga,p,n,\gg} \int_{\Omega }\frac{|\vgf|^p}{\mathrm{d}_\Gw^{p+\ga}}\dx  \qquad \forall \vgf\in C^{\infty }_c(\Omega).
%\end{equation}
%	%
\end{lemma}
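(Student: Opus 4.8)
The plan is to verify the weak supersolution inequality for $u:=\mathrm{d}_\Gw^\gg$ by a direct computation of the weighted flux $\mathrm{d}_\Gw^{-\ga}|\na u|^{p-2}\na u$ and then to reduce its divergence to the distributional Laplacian of $\mathrm{d}_\Gw$, which is controlled from above by Lemma~\ref{semiconcave}. Write $d:=\mathrm{d}_\Gw$ and $\beta:=(\gg-1)(p-1)-\ga$. Since $d$ is Lipschitz with $|\na d|=1$ a.e.\ and $\gg>0$, on any compact subset of $\Gw$ the function $u=d^\gg$ is Lipschitz and
$$\mathrm{d}_\Gw^{-\ga}|\na u|^{p-2}\na u=|\gg|^{p-1}d^{\beta}\,\na d\qquad\text{a.e. in }\Gw.$$
Noting that $u^{p-1}/d^{p+\ga}=d^{\gg(p-1)-p-\ga}=d^{\beta-1}$, it remains to show that for every $0\le\psi\in C^\infty_c(\Gw)$ one has
$$|\gg|^{p-1}\int_\Gw d^{\beta}\,\na d\cdot\na\psi\dx\ \ge\ C_{\ga,p,n,\gg}\int_\Gw d^{\beta-1}\psi\dx.$$

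To estimate the left-hand side I would integrate by parts against the weight. Using $|\na d|^2=1$ a.e., one has the pointwise identity $d^{\beta}\na d\cdot\na\psi=\na d\cdot\na(d^{\beta}\psi)-\beta\,d^{\beta-1}\psi$. The point of this rewriting is that $d^{\beta}\psi$ is a legitimate nonnegative competitor for Lemma~\ref{semiconcave}(ii): on $\mathrm{supp}\,\psi$ one has $d\ge c>0$, so $d^{\beta}\psi$ is Lipschitz with compact support in $\Gw$, and after mollification (which preserves nonnegativity) the inequality \eqref{lapdq} passes to $d^{\beta}\psi$ in the limit. This yields
$$\int_\Gw\na d\cdot\na(d^{\beta}\psi)\dx\ \ge\ -(n-1)\int_\Gw\frac{d^{\beta}\psi}{d}\dx=-(n-1)\int_\Gw d^{\beta-1}\psi\dx,$$
and combining the two displays gives $\int_\Gw d^{\beta}\na d\cdot\na\psi\dx\ge-(n-1+\beta)\int_\Gw d^{\beta-1}\psi\dx$.

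Finally I would match the constants. Since $-(n-1+\beta)=\ga-n+1-(\gg-1)(p-1)$, multiplying the previous estimate by $|\gg|^{p-1}$ reproduces exactly $C_{\ga,p,n,\gg}$, establishing the required inequality and hence that $u$ is a positive supersolution. The strict positivity $C_{\ga,p,n,\gg}>0$ follows from the hypothesis $0<\gg<(\ga+p-n)/(p-1)$, because $\ga-n+1-(\gg-1)(p-1)=(\ga+p-n)-\gg(p-1)>0$. I expect the only genuine subtlety to be the approximation step: Lemma~\ref{semiconcave}(ii) is stated for smooth test functions, so one must justify inserting the merely Lipschitz function $d^{\beta}\psi$. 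This is harmless precisely because $\psi$ has compact support, which keeps $d$ bounded away from $0$ and $d^{\beta}$ Lipschitz on $\mathrm{supp}\,\psi$; all the integrals above are then finite regardless of the sign of $\beta-1$.
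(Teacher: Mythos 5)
Your proposal is correct and follows essentially the same route as the paper's proof: compute the flux $\mathrm{d}_\Gw^{-\ga}|\na(\mathrm{d}_\Gw^\gg)|^{p-2}\na(\mathrm{d}_\Gw^\gg)=|\gg|^{p-1}\mathrm{d}_\Gw^{(\gg-1)(p-1)-\ga}\na \mathrm{d}_\Gw$ using $|\na \mathrm{d}_\Gw|=1$ a.e., rewrite $\mathrm{d}_\Gw^{\beta}\na \mathrm{d}_\Gw\cdot\na\psi=\na \mathrm{d}_\Gw\cdot\na(\mathrm{d}_\Gw^{\beta}\psi)-\beta\,\mathrm{d}_\Gw^{\beta-1}\psi$, and apply \eqref{lapdq} to the nonnegative test function $\mathrm{d}_\Gw^{\beta}\psi$, which yields exactly the constant $C_{\ga,p,n,\gg}$. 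Your explicit remark on justifying the insertion of the merely Lipschitz function $\mathrm{d}_\Gw^{\beta}\psi$ into \eqref{lapdq} by mollification is a detail the paper leaves implicit, but it is the same argument.
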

\begin{proof}
%	 Assume first that  $\Gw$ is a $C^2$-domain. We claim that $u= \mathrm{d}_{\Gw}^\gg$ is a (weak) positive supersolution of the equation
%%
%\begin{equation*}\label{dp22}
%-\diver(\mathrm{d}_{\Gw}^{-\ga} |\na u|^{p-2}\na u)- C_{\ga,p,n,\gg} \frac{|u|^{p-2}u}{\mathrm{d}_{\Gw}^{p+\ga}}=0   \quad \text{ in } \Gw.
%\end{equation*}
%Indeed,
Using \eqref{lapdq} we obtain
\begin{multline*}
\int_{\Omega }\!\!\mathrm{d}_{\Gw}^{-\ga}|\nabla( \mathrm{d}_{\Gw}^\gg)|^{p-2}\nabla( \mathrm{d}_{\Gw}^\gg) \!\cdot \!\nabla \vgf \mathrm{d}x \!
=\!  |\gg|^{p-2}\gg \!\! \int_{\Omega } \!\!\mathrm{d}_{\Gw}^{(\gg-1)(p-1)-\ga} \nabla \mathrm{d}_{\Gw} \! \cdot \! \nabla \vgf \mathrm{d}x\\
=\!|\gg|^{p-1}\!\!\!  \int_{\Omega }\!\!\! \left(\! \nabla( \mathrm{d}_{\Gw}) \! \cdot \! \nabla (\mathrm{d}_{\Gw}^{\!(\gg-1)(p-1)-\ga} \!\! \vgf) \!-\! ((\gg \! - \!1)(p \! - \! 1)-\ga)\mathrm{d}_{\Gw}^{(\!\gg-1)(p-1)-\ga-1}\! \vgf \!\right) \!\mathrm{d}x\\
\geq C_{\ga,p,n,\gg} \int_{\Omega } \mathrm{d}_{\Gw}^{{(\gg-1)(p-1)-\ga-1}}\vgf \,\mathrm{d}x  . \qedhere
\end{multline*}
	\end{proof}
\begin{remark}
	Observe that $$ C_{\ga,p,n}= \max\left\{ C_{\ga,p,n,\gg} \mid {\gg \in \left(0,\frac{\ga+p-n}{p-1}\right)}\right\} ,$$
and the maximum is obtained with $\gg=(\ga+p-n)/p$.
\end{remark}

\begin{proof}[Alternative proof of Theorem~\ref{main_thm} I]
Using  Lemma \ref{a3} for  $\gg=(\ga+p-n)/p$, we deduce that $\mathrm{d}_{\Gw}^{(\ga+p-n)/p}$ is positive (weak) supersolution to \eqref{dp21}. Consequently, the AAP-type theorem \cite[Theorem~4.3]{pinpsa} implies the Hardy-type inequality \eqref{dp15}.
\end{proof}

 \begin{proof}[Alternative proof of Theorem~\ref{main_thm} II] 
Let $\Gw\! \subsetneqq \!\R^n$ be a domain, and fix  $s>n$. Using Lemma~\ref{semiconcave},  the following $L^1$-Hardy inequality is proved in  \cite[Theorem 2.3]{psaradakis}: 
\begin{align}\label{dp17}
\int_\Gw \frac{|\na \vgf|}{\mathrm{d}_{\Gw}^{s-1}}\dx\geq (s-n)\int_\Gw \frac{| \vgf |}{\mathrm{d}_{\Gw}^{s}}\dx \qquad \forall~ \vgf \in C_c^\infty(\Gw).
\end{align}
  Substituting $\vgf=|\psi|^p$  in \eqref{dp17} and using H\"older inequality, we obtain
\begin{align*}
	\frac{s-n}{p}\int_\Gw \frac{| \psi|^p}{\mathrm{d}_{\Gw}^{s}} \dx &\leq \int_\Gw \frac{|\psi|^{p-1}|\na \psi|}{\mathrm{d}_{\Gw}^{s-1}} \dx = \int_\Gw \frac{| \psi|^{p-1}}{\mathrm{d}_{\Gw}^{s-s/p}}\frac{| \na \psi|}{\mathrm{d}_{\Gw}^{s/p-1}} \dx\\[2mm]
	&\leq \left(\int_\Gw \frac{| \psi|^p}{\mathrm{d}_{\Gw}^{s}}  \right)^{1-1/p}\left(\int_\Gw \frac{| \na \psi|^p}{\mathrm{d}_{\Gw}^{s-p}}  \right)^{1/p} \qquad \forall~ \psi \in C_c^\infty(\Gw).
\end{align*}
Hence,  for $s= \ga+p$, we get \eqref{dp15}.
\end{proof}

\begin{center}
	{\bf Acknowledgements}
\end{center}
D.~G. and Y.~P.  acknowledge  the  support  of  the  Israel  Science Foundation (grant  637/19) founded by the Israel Academy of Sciences and Humanities.

\end{document}